\newtheorem{thm}{Theorem}[section]
\theoremstyle{definition}
\theoremstyle{remark}
\newtheorem{rem}[thm]{Remark}
\numberwithin{equation}{section}
\numberwithin{figure}{section}
\newcommand{\diff}{\mathrm{d}}
\newcommand{\C}{{\mathbb C}}
\newcommand{\R}{{\mathbb R}}
\newcommand{\D}{{\mathbb D}}
\newcommand{\imag}{\mathrm{i}}
\newcommand{\e}{\mathrm{e}}
\newcommand{\hDelta}{\varDelta}
\newcommand{\dbar}{\bar\partial}
\newcommand{\Mop}{{\mathbf M}}
\newcommand{\Tope}{{\mathbf T}}
\DeclareMathOperator{\im}{Im}
\begin{document}

%
\title{On H\"ormander's solution of the $\bar\partial$-equation}


%

\author{Haakan Hedenmalm}
\address{Hedenmalm: Department of Mathematics\\
KTH Royal Institute of Technology\\
S--10044 Stockholm\\
Sweden}

\email{haakanh@math.kth.se}

\subjclass[2000]{Primary }
\keywords{$\bar\partial$-equation, weighted estimates}
 
\thanks{The research of the author was supported by the G\"oran Gustafsson 
Foundation (KVA) and by Vetenskapsr\aa{}det (VR)}

\begin{abstract} 
We explain how H\"ormander's classical solution of the $\bar\partial$-equation
in the plane with a weight which permits growth near infinity carries over
to the rather opposite situation when we ask for decay near infinity. Here,
however, a natural condition on the datum needs to be imposed. The condition
is not only natural but also necessary to have the result at least in the
Fock weight case. The norm identity which leads to the estimate is related to
general area-type results in the theory of conformal mappings.  
\end{abstract}

\maketitle

\centerline{\em In memory of Lars H\"ormander}

\section{Introduction} 

\subsection{Basic notation} Let 
\[
\hDelta:=\frac14\bigg(\frac{\partial^2}{\partial x^2}+
\frac{\partial^2}{\partial y^2}\bigg),\qquad\diff A(z):=\diff x\diff y,
\]
denote the normalized Laplacian and the area element, respectively. 
Here, $z=x+\imag y$
is the standard decomposition into real and imaginary parts. We let $\C$ denote
the complex plane. We also need the standard complex differential operators
\[
\dbar_z:=\frac{1}{2}\bigg(\frac{\partial}{\partial x}+\imag
\frac{\partial}{\partial y}\bigg),\quad \partial_z:=
\frac{1}{2}\bigg(\frac{\partial}{\partial x}-\imag
\frac{\partial}{\partial y}\bigg),
\]
so that $\hDelta$ factors as $\hDelta=\partial_z\dbar_z$. We sometimes drop
indication of the differentiation variable $z$.


\subsection{H\"ormander's solution of the $\bar\partial$-problem} 
We present H\"ormander's theorem \cite{Horm1}, \cite{Horm2} in the simplest 
possible case, when the domain is the entire complex plane and the weight 
$\phi:\C\to\R$ is $C^{2}$-smooth with $\hDelta\phi>0$ everywhere.

\begin{thm}
{\rm(H\"ormander)} If the complex-valued function $f$ is locally area 
$L^2$-integrable in the plane $\C$, then there exists a solution to the 
$\dbar$-equation $\dbar u=f$ with
\[
\int_\C|u|^2\e^{-2\phi}\diff A\le\frac12
\int_\C|f|^2\frac{\e^{-2\phi}}{\hDelta\phi}\diff A.
\]
\label{thm-H1}
\end{thm}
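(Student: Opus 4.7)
The plan is to cast the equation in a Hilbert--space duality framework and extract $u$ by Hahn--Banach together with Riesz representation. Set $H:=L^2(\C,\e^{-2\phi}\dA)$ and regard $T=\dbar$ as a densely defined operator on $H$ with $C^\infty_c(\C)$ in its domain. A short integration by parts against a test function reveals the formal adjoint to be
\[
T^*g = -\partial g + 2g\,\partial\phi = -\e^{2\phi}\,\partial\bigl(g\,\e^{-2\phi}\bigr).
\]

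The heart of the argument will be a single commutator identity. Since $\partial\dbar\phi=\hDelta\phi$ and $\partial$, $\dbar$ commute on smooth functions, a direct computation yields
\[
\dbar\,T^*g - T^*\dbar g = 2g\,\hDelta\phi,\qquad g\in C^\infty_c(\C).
\]
Pairing both sides with $g$ in $H$ and shifting $\dbar$ and $T^*$ onto their formal adjoints produces the norm identity
\[
\int_\C|T^*g|^2\e^{-2\phi}\dA = \int_\C|\dbar g|^2\e^{-2\phi}\dA + 2\int_\C|g|^2\hDelta\phi\,\e^{-2\phi}\dA,
\]
and discarding the nonnegative $|\dbar g|^2$ term gives the one-sided H\"ormander-type estimate
\[
\int_\C|g|^2\hDelta\phi\,\e^{-2\phi}\dA \le \frac12\int_\C|T^*g|^2\e^{-2\phi}\dA.
\]
The main obstacle will be the careful justification of these integrations by parts (controlling boundary contributions and bookkeeping the signs correctly); the rest of the argument is routine functional analysis.

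Granted the inequality, Cauchy--Schwarz yields for every $g\in C^\infty_c(\C)$
\[
\bigabs{\ip{f,g}_H} \le \biggl(\int_\C|f|^2\frac{\e^{-2\phi}}{\hDelta\phi}\dA\biggr)^{1/2}\biggl(\int_\C|g|^2\hDelta\phi\,\e^{-2\phi}\dA\biggr)^{1/2} \le \frac{1}{\sqrt{2}}\,M\,\norm{T^*g}_H,
\]
where $M:=\bigl(\int_\C|f|^2\e^{-2\phi}/\hDelta\phi\,\dA\bigr)^{1/2}$. If $M=\infty$ there is nothing to prove, so assume $M<\infty$. The norm identity above forces $T^*$ to be injective on $C^\infty_c(\C)$ (since $\hDelta\phi>0$ everywhere), so $L:T^*g\mapsto\ip{f,g}_H$ is a well-defined linear functional on $T^*(C^\infty_c(\C))\subset H$ of norm at most $M/\sqrt{2}$. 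Extending $L$ to $H$ by Hahn--Banach and applying the Riesz representation theorem furnishes $u\in H$ with $\norm{u}_H\le M/\sqrt{2}$ (which is the desired area-integral bound) and $\ip{T^*g,u}_H=\ip{f,g}_H$ for all $g\in C^\infty_c(\C)$; this last relation is precisely the statement that $\dbar u=f$ in the sense of distributions, completing the scheme.
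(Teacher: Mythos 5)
Your proof is correct and follows the classical H\"ormander duality scheme that the paper itself relies on: the paper states Theorem \ref{thm-H1} without proof (citing \cite{Horm1}, \cite{Horm2}), but your a priori inequality is exactly the paper's norm identity \eqref{eq-normid1} after the substitution $v=\bar g\,\e^{-\phi}$, which turns $\dbar v-v\dbar\phi$ into $-\e^{-\phi}\,\overline{T^*g}$ and $\partial v+v\partial\phi$ into $\e^{-\phi}\,\overline{\dbar g}$, and the Hahn--Banach/Riesz completion is routine. One bookkeeping remark: $g\mapsto\ip{f,g}_H$ is antilinear while $g\mapsto T^*g$ is linear, so you should set $L(T^*g):=\ip{g,f}_H$ to get a linear functional; with that adjustment the rest goes through verbatim and yields $\dbar u=f$ in the sense of distributions with $\norm{u}_H\le M/\sqrt2$.
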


Here, we remark that the assertion of the theorem is void unless the integral
on the right hand side is finite.

\subsection{The $\dbar$-equation with growing weights}
While Theorem \ref{thm-H1} essentially deals with decaying weights 
$\e^{-2\phi}$, it is natural to ask what happens if we were to consider the
growing weights $\e^{2\phi}$ in place of $\e^{-2\phi}$. 
So when could we say that there exists a solution to $\dbar u=f$ with
\begin{equation}
\int_\C|u|^2\e^{2\phi}\diff A\lesssim
\int_\C|f|^2\e^{2\phi}\diff A,
\label{eq-adjest1}
\end{equation}
where the symbol ``$\lesssim$'' is understood liberally? Here, we should have 
the (Fock weight) example $\phi(z)=\frac12|z|^2$ in mind. 
It is rather clear that we 
cannot hope to have an estimate of the type \eqref{eq-adjest1} without an
additional condition on on the datum $f$. For instance, if 
$\phi(z)=\frac12|z|^2$ and $f(z)=\e^{-|z|^2}$, it is not possible to find 
such a fast-decaying function $u$ with $\dbar u=f$ (cf. Section 
\ref{sec-constr} below). 
It is natural to look for a class of data $f$ that would
come from functions $u$ with compact support. Let $C^\infty_c(\C)$ denote the
standard space of infinitely differentiable compactly supported test functions.
The calculation 
\[
\int_\C fg\diff A=\int_\C g\dbar u\diff A=-\int_\C u\dbar g\diff A, 
\qquad u\in C^\infty_c(\C),
\]
shows that the datum $f=\dbar u$ with $u\in C^\infty_c(\C)$ must satisfy,
for entire functions $g$,
\begin{equation}
\int_\C fg\diff A=0.
\label{eq-dual1}
\end{equation}

We remark here that the calculation \eqref{eq-dual1} is the basis for what is
known as Havin's lemma \cite{Hav} (see also, e.g., \cite{Hed1}). 

We let $L^2(\C,\e^{-2\phi})$ and $L^2(\C,\e^{2\phi})$ be the weighted area 
$L^2$-spaces with the indicated weights. The corresponding norms are
\[
\|g\|^2_{L^2(\C,\e^{-2\phi})}=\int_\C|g|^2\e^{-2\phi}\diff A,\quad
\|g\|^2_{L^2(\C,\e^{2\phi})}=\int_\C|g|^2\e^{2\phi}\diff A.
\]
The space of entire functions in $L^2(\C,\e^{-2\phi})$ is denoted by 
$A^2(\C,\e^{-2\phi})$. We recall the standing assumption that $\phi$ be
$C^2$-smooth with $\Delta\phi>0$ everywhere.

\begin{thm}
Suppose $f\in L^2(\C,\e^{2\phi})$ meets the condition \eqref{eq-dual1} for all
$g\in A^2(\C,\e^{-2\phi})$. Then there exists a solution to the 
$\dbar$-equation $\dbar u=f$ with
\[
\int_\C|u|^2\e^{2\phi}\hDelta\phi\diff A\le\frac12
\int_\C|f|^2\e^{2\phi}\diff A.
\]
\label{thm-H2}
\end{thm}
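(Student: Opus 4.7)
The plan is to follow H\"ormander's $L^2$-blueprint but with the roles of $\dbar$ and its formal adjoint swapped, so that the sign of $\hDelta\phi$ now works in our favor. Three steps are natural: a norm identity on test functions, a Havin-type density statement, and a Cauchy completion.

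First, for every $u\in C^\infty_c(\C)$, I would establish the norm identity
\[
\int_\C|\dbar u|^2\e^{2\phi}\dA
= \int_\C\bigl\lvert\partial u+2u\,\partial\phi\bigr\rvert^2\e^{2\phi}\dA
  + 2\int_\C|u|^2\hDelta\phi\cdot\e^{2\phi}\dA.
\]
Observe that $\partial u+2u\,\partial\phi=\e^{-2\phi}\partial(u\e^{2\phi})$ is (up to a sign) the formal adjoint of $\dbar$ relative to the weighted sesquilinear pairing $(f,g)\mapsto\int_\C f\bar g\,\e^{2\phi}\dA$, so this is the Bochner--Kodaira--H\"ormander identity adapted to the weight $\e^{2\phi}$. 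It is proved by expanding the square on the right and integrating by parts (once on $\partial$ and once on $\dbar$), using that $u$ has compact support; equivalently, it is what H\"ormander's classical identity becomes after replacing $\phi$ by $-\phi$. Dropping the non-negative middle term yields the a priori estimate
\[
\int_\C|u|^2\hDelta\phi\cdot\e^{2\phi}\dA\le\tfrac12\int_\C|\dbar u|^2\e^{2\phi}\dA,\qquad u\in C^\infty_c(\C).
\]

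Second, I invoke Havin's lemma in the form: $\dbar(C^\infty_c(\C))$ is dense in the subspace $\mathcal{N}:=\{f\in L^2(\C,\e^{2\phi}):\int_\C fg\,\dA=0\text{ for all }g\in A^2(\C,\e^{-2\phi})\}$. The inclusion $\dbar(C^\infty_c)\subseteq\mathcal{N}$ is exactly the computation \eqref{eq-dual1}. For the reverse, $L^2(\C,\e^{-2\phi})$ realizes the dual of $L^2(\C,\e^{2\phi})$ through the bilinear pairing $(f,h)\mapsto\int_\C fh\,\dA$, and any $h$ in this dual annihilating $\dbar(C^\infty_c)$ satisfies $\dbar h=0$ distributionally (by integration by parts); Weyl's lemma forces $h$ to be holomorphic, hence $h\in A^2(\C,\e^{-2\phi})$, and Hahn--Banach then identifies $\overline{\dbar(C^\infty_c)}$ with $\mathcal{N}$.

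Finally, given $f\in\mathcal{N}$, choose $u_n\in C^\infty_c(\C)$ with $\dbar u_n\to f$ in $L^2(\C,\e^{2\phi})$. The a priori estimate applied to $u_n-u_m$ renders $\{u_n\}$ Cauchy in $L^2(\C,\e^{2\phi}\hDelta\phi)$; let $u$ be its limit. Passing to the limit in $\int_\C u_n\,\dbar\varphi\,\dA=-\int_\C(\dbar u_n)\varphi\,\dA$ against arbitrary $\varphi\in C^\infty_c(\C)$ yields $\dbar u=f$ distributionally, and passing to the limit in the a priori estimate gives the announced bound. The main technical obstacle is the norm identity of Step 1: once one commits to the correct adjoint $\partial u+2u\,\partial\phi$, its only real difficulty is bookkeeping the $\phi$-weights through the two integrations by parts, and both the Havin density and the Cauchy completion are then routine.
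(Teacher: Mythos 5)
Your proposal is correct and takes essentially the same route as the paper: under the substitution $v=\e^{\phi}u$ your weighted identity is exactly the paper's unweighted identity $\|\dbar v-v\dbar\phi\|_{L^2}^2-\|\partial v+v\partial\phi\|_{L^2}^2=2\int_\C|v|^2\hDelta\phi\,\dA$ for the twisted operator $\Tope=\Mop_{\e^\phi}\dbar\Mop_{\e^{-\phi}}$. Your Havin-type density and Cauchy-completion steps likewise correspond, point for point, to the paper's identification of the $L^2(\C)$-closure of $\Tope C^\infty_c(\C)$ with $L^2(\C)\ominus\mathrm{ker}\,\Tope^*$ and the subsequent passage to the limit.
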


Superficially, this theorem looks quite different than H\"ormander's
Theorem \ref{thm-H1}. However, it is in a sense which can be made precise
dual to Theorem \ref{thm-H1}.
In our presentation, we will derive it from the same rather elementary 
calculation which H\"ormander uses in e.g. \cite{Horm2}, p. 250. 
As such, Theorem \ref{thm-H2} may well be known, but we have not found a 
specific reference. 

We might add that if the polynomials are dense in the space 
$A^2(\C,\e^{-2\phi})$ of entire functions, as they are, e.g., for radial 
$\phi$, 
then the condition \eqref{eq-dual1} needs to be verified only for monomials
$g(z)=z^j$, $j=0,1,2,\ldots$. 

\begin{rem}
Naturally, Theorem \ref{thm-H2} should generalize to the setting of several 
complex variables. 
\end{rem}

Under a simple condition on the weight, the solution supplied by Theorem 
\ref{thm-H2} is unique.
 
\begin{thm}
If, in addition, $\phi$ is $C^4$-smooth and meets the curvature-type condition
\[
\frac{1}{\Delta\phi}\Delta\log\Delta\phi\ge-2 \,\,\,\,\text{on}\,\,\,\C,
\]
then the solution $u$ in Theorem \ref{thm-H2} is unique.
\label{thm-H2.1}
\end{thm}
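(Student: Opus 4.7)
My plan is to reduce the uniqueness claim to a Liouville-type statement for entire functions in an $L^2$-space, and then to use the curvature hypothesis to promote that to a statement about subharmonic functions with finite area integral. If $u_1,u_2$ are two solutions satisfying the bound from Theorem~\ref{thm-H2}, the difference $h:=u_1-u_2$ satisfies $\dbar h=0$ and, by Weyl's lemma, is entire; it also inherits membership in $L^2(\C,\e^{2\phi}\hDelta\phi\,\dA)$ by the triangle inequality. So I need only show that every entire $h$ with $\int_\C|h|^2\e^{2\phi}\hDelta\phi\,\dA<\infty$ vanishes identically.

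The key observation will be that the stated curvature condition is precisely what makes the weight log-subharmonic. Writing $W:=\e^{2\phi}\hDelta\phi$, I would compute
\[
\hDelta\log W=2\hDelta\phi+\hDelta\log\hDelta\phi=\hDelta\phi\bigg(2+\frac{\hDelta\log\hDelta\phi}{\hDelta\phi}\bigg)\ge 0,
\]
using both $\hDelta\phi>0$ and the hypothesis of the theorem. The $C^4$-smoothness of $\phi$ enters exactly here, to make $\log\hDelta\phi$ a $C^2$ function and render $\hDelta\log W$ classically meaningful. Hence $\log W$ is subharmonic on $\C$; and since $\log|h|^2$ is subharmonic for any entire $h$ (and identically $-\infty$ only when $h\equiv 0$), the sum $\log(|h|^2W)=\log|h|^2+\log W$ is subharmonic as well, whence its exponential $|h|^2W$ is a nonnegative subharmonic function on $\C$.

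To finish, I would invoke the classical fact that a nonnegative subharmonic function on $\C$ with finite area integral must vanish identically. Concretely, the sub-mean value inequality over the disk $D(z,R)$ gives
\[
|h(z)|^2W(z)\le\frac{1}{\pi R^2}\int_{D(z,R)}|h|^2W\,\dA\le\frac{1}{\pi R^2}\int_\C|h|^2\e^{2\phi}\hDelta\phi\,\dA\longrightarrow 0
\]
as $R\to\infty$, forcing $|h|^2W\equiv 0$ on $\C$. Since $W>0$ everywhere, this yields $h\equiv 0$, as required.

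I do not foresee a serious technical obstacle; the whole argument hinges on a single observation. If there is a main step, it is the conceptual one of recognizing that the specific ratio appearing in the curvature-type hypothesis is simply a disguised form of the subharmonicity $\hDelta\log W\ge 0$. Once this is noticed, everything downstream is standard subharmonic calculus combined with the elementary Liouville-type area argument.
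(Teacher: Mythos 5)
Your proposal is correct and follows essentially the same route as the paper: reduce uniqueness to a Liouville-type statement for entire functions in $L^2(\C,\e^{2\phi}\hDelta\phi)$, observe that the curvature hypothesis makes $|h|^2\e^{2\phi}\hDelta\phi$ subharmonic, and conclude via the sub-mean value inequality over large disks. The only cosmetic difference is that you isolate the subharmonicity of the weight $W=\e^{2\phi}\hDelta\phi$ before adding $\log|h|^2$, whereas the paper computes $\Delta\log\bigl[|F|^2\e^{2\phi}\Delta\phi\bigr]$ all at once.
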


\section{The proof of Theorem \ref{thm-H2}}

\subsection{A norm identity}
We denote by $\|\cdot\|_{L^2}$ the norm in the space $L^2(\C)$. For a function
$F$, we let $\Mop_F$ denote the operator of multiplication by $F$. The first
step is the following norm identity for $v\in C^\infty_c(\C)$:
\begin{equation} 
\|\dbar v-v\dbar\phi\|^2_{L^2}-\|\partial v+v\partial\phi\|^2_{L^2}=
2\int_\C|v|^2\hDelta\phi \diff A.
\label{eq-normid1}
\end{equation}
To arrive at \eqref{eq-normid1}, we do as follows. As for the first term, 
if we let $\langle\cdot,\cdot\rangle_{L^2}$ denote the standard sesquilinear
inner product of $L^2(\C)$, we see that
\[
\|\dbar v-v\dbar\phi\|^2_{L^2}=\langle \dbar v-v\dbar\phi,\dbar v-v\dbar\phi
\rangle_{L^2}=\langle (\dbar-\Mop_{\dbar\phi})v,(\dbar-\Mop_{\dbar\phi})v
\rangle_{L^2}=\langle (\dbar-\Mop_{\dbar\phi})^*
(\dbar-\Mop_{\dbar\phi})v,v\rangle_{L^2}
\]
and together with the corresponding calculation for the second term, we
find that \eqref{eq-normid1} expresses that
\begin{equation}
(\dbar-\Mop_{\dbar\phi})^*(\dbar-\Mop_{\dbar\phi})-
(\partial+\Mop_{\partial\phi})^*
(\partial+\Mop_{\partial\phi})=2\Mop_{\hDelta\phi}.
\label{eq-normid2}
\end{equation}  
Here, the adjoints involved are readily expressed: $\partial^*=-\dbar$, 
$\dbar^*=-\partial$, and $\Mop_F^*=\Mop_{\bar F}$. The product rule says that
$\dbar\Mop_F=\Mop_{\dbar F}+\Mop_F\dbar$ and 
$\partial\Mop_F=\Mop_{\partial F}+\Mop_F\partial$. By identifying adjoints
and carrying out the necessary algebraic manipulations, \eqref{eq-normid2}
is immediate. 

\subsection{Reduction to a norm inequality}
While the norm identity \eqref{eq-normid1} is interesting by itself, 
we observe here that it has the consequence that
\begin{equation} 
2\int_\C|v|^2\hDelta\phi\diff A\le\|\dbar v-v\dbar\phi\|^2_{L^2},\qquad
v\in C^\infty_c(\C).
\label{eq-normest3}
\end{equation}
We write $\Tope:=\dbar-\Mop_{\dbar\phi}$, and express \eqref{eq-normest3} again:
\begin{equation} 
2\int_\C|v|^2\hDelta\phi\diff A\le\|\Tope v\|^2_{L^2},\qquad
v\in C^\infty_c(\C).
\label{eq-normest4}
\end{equation}
If $v_j$ is a sequence of functions such that $\Tope v_j$ converges in $L^2
(\C)$, then by \eqref{eq-normest4} the functions $v_j\sqrt{\hDelta\phi}$
converge in $L^2(\C)$ as well, and in particular, $v_j$ converges locally as
an $L^2$-function. It follows that if $h\in L^2(\C)$ is in to the 
$L^2(\C)$-closure of $\Tope C^\infty_c(\C)$, then there exists a function 
$v\in L^2(\C,\hDelta\phi)$ such that $\Tope v=h$ in the sense of distribution
theory, with
\begin{equation} 
2\int_\C|v|^2\hDelta\phi\diff A\le\|h\|^2_{L^2}.
\label{eq-normest5}
\end{equation}
It remains to identify the $L^2(\C)$-closure of $\Tope C^\infty_c(\C)$. 
To this end, we identify the orthogonal complement of $\Tope C^\infty_c(\C)$. 
So, let $k\in L^2(\C)$ be such that 
\begin{equation}
\langle k,\Tope v\rangle_{L^2}=0,\qquad v\in C^\infty_c(\C).
\label{eq-duality1}
\end{equation}
If we write $\Tope^*:=-\partial-\Mop_{\partial\phi}$, distribution theory 
gives that \eqref{eq-duality1} is the same as
\[
\langle \Tope^* k,v\rangle_{L^2}=0,\qquad v\in C^\infty_c(\C),
\]
which in its turn expresses that $\Tope^*k=0$ holds in the sense of 
distributions. Let us write $\mathrm{ker}\,\Tope^*$ for the space of all
$k\in L^2(\C)$ with $\Tope^*k=0$. We have arrived at the following result.

\begin{thm}
Suppose $h\in L^2(\C)\ominus\mathrm{ker}\,\Tope^*$. Then there exists a 
function $v\in L^2(\C,\hDelta\phi)$ such that $\Tope v=h$ with
\[
2\int_\C|v|^2\hDelta\phi\diff A\le\|h\|^2_{L^2}.
\]
\label{thm-H3}
\end{thm}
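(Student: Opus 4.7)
The plan is to combine the a priori estimate \eqref{eq-normest4} with the duality identification of $(\Tope C^\infty_c(\C))^\perp$ carried out in the paragraphs immediately preceding the statement. Since that computation shows the orthogonal complement of $\Tope C^\infty_c(\C)$ in $L^2(\C)$ equals $\ker\Tope^*$, the standard Hilbert-space identity $(S^\perp)^\perp=\overline{S}$ forces the $L^2(\C)$-closure of $\Tope C^\infty_c(\C)$ to coincide with $L^2(\C)\ominus\ker\Tope^*$. Thus, given $h\in L^2(\C)\ominus\ker\Tope^*$, I would select a sequence $v_j\in C^\infty_c(\C)$ with $\Tope v_j\to h$ in $L^2(\C)$.

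Applying \eqref{eq-normest4} to $v_j-v_k$, I see that $\{v_j\sqrt{\hDelta\phi}\}$ is Cauchy in $L^2(\C)$, with some limit $V$. Setting $v:=V/\sqrt{\hDelta\phi}$---well-defined because $\hDelta\phi>0$ everywhere---yields $v\in L^2(\C,\hDelta\phi)$, and passing to the limit in \eqref{eq-normest4} gives the desired norm bound
\[
2\int_\C|v|^2\hDelta\phi\,\diff A\le\|h\|^2_{L^2}.
\]

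It remains to verify $\Tope v=h$ in the sense of distributions. For each $\varphi\in C^\infty_c(\C)$, on one hand $\langle\Tope v_j,\varphi\rangle_{L^2}\to\langle h,\varphi\rangle_{L^2}$ by $L^2(\C)$-convergence, while on the other hand $\langle\Tope v_j,\varphi\rangle_{L^2}=\langle v_j,\Tope^*\varphi\rangle_{L^2}$. Since $\hDelta\phi$ is continuous and strictly positive, it is bounded below by a positive constant on the compact support of $\Tope^*\varphi$; hence the Cauchy property of $v_j\sqrt{\hDelta\phi}$ in $L^2(\C)$ forces $v_j\to v$ in $L^2_{\mathrm{loc}}(\C)$, and so $\langle v_j,\Tope^*\varphi\rangle_{L^2}\to\langle v,\Tope^*\varphi\rangle_{L^2}=\langle\Tope v,\varphi\rangle$, yielding $\Tope v=h$. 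The one delicate point is this last local-$L^2$ passage; it is what legitimizes taking the distributional limit, and it works precisely because of the standing positivity assumption on $\hDelta\phi$. Beyond that, the theorem is essentially bookkeeping on top of \eqref{eq-normest4} and the $\Tope$--$\Tope^*$ duality already unwound in the excerpt.
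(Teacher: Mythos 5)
Your proposal is correct and follows essentially the same route as the paper: the a priori estimate \eqref{eq-normest4} applied to differences $v_j-v_k$ gives convergence of $v_j\sqrt{\hDelta\phi}$ in $L^2(\C)$, hence a limit $v\in L^2(\C,\hDelta\phi)$ solving $\Tope v=h$ distributionally, and the closure of $\Tope C^\infty_c(\C)$ is identified with $L^2(\C)\ominus\mathrm{ker}\,\Tope^*$ via the orthogonal-complement computation. You merely spell out in more detail the local-$L^2$ convergence and the distributional verification that the paper leaves implicit.
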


Since $\Tope^*=-\Mop_{\e^{-\phi}}\partial\Mop_{\e^{\phi}}$, 
$k\in\mathrm{ker}\,\Tope^*$ holds if and only if $\e^{\phi}\bar k\in 
A^2(\C,\e^{-2\phi})$.  

\begin{proof}[Proof of Theorem \ref{thm-H2}]
We put $h:=\e^\phi f\in L^2(\C)$. By condition \eqref{eq-dual1} for all 
$g\in A^2(\C,\e^{-2\phi})$, we have -- if we write $k=\e^{-\phi}\bar g$ -- 
that
\[
0=\int_\C fg\diff A=\int_\C h\bar k\diff A.
\]
As the function $g$ runs over $A^2(\C,\e^{-2\phi})$, $k$ runs over all 
elements 
of $\mathrm{ker}\,\Tope^*$. We conclude that $h\in 
L^2(\C)\ominus\mathrm{ker}\,\Tope^*$, so that Theorem \ref{thm-H3} applies,
and gives a $v\in L^2(\C,\Delta\phi)$ with $\Tope v=h$ with the norm control
\begin{equation}
2\int_\C|v|^2\hDelta\phi\diff A\le\|h\|^2_{L^2}.
\label{eq-dbarest}
\end{equation}
We put $u:=\e^{-\phi}v$. The operator $\Tope$ factors 
$\Tope=\Mop_{\e^\phi}\dbar
\Mop_{\e^{-\phi}}$, which means that the equation $\Tope v=h$ is equivalent to
$\dbar u=f$. Finally, the estimate \eqref{eq-dbarest} is equivalent to the
estimate
\[
2\int_\C|u|^2\e^{2\phi}\hDelta\phi\diff A\le\int_\C|f|^2\e^{2\phi}\diff A,
\]
which concludes the proof of the theorem. 
\end{proof}

\begin{proof}[Proof of Theorem \ref{thm-H2.1}]
We first observe that any two solutions of the $\bar\partial$-equation 
differ by an entire function. Moreover, under the given curvature-type 
condition, an entire function 
$F\in L^2(\C,\e^{2\phi}\Delta\phi)$ 
necessarily must vanish everywhere, by the following argument. The function 
$|F|^2\e^{2\phi}\Delta\phi$ is clearly nonnegative and it is also 
\emph{subharmonic in} $\C$. Indeed,
if $F$ is nontrivial, we have that [in the sense of distribution theory]
$\Delta\log|F|\ge0$, and consequently
\[
\Delta\log\big[|F|^2\e^{2\phi}\Delta\phi\big]=2\Delta\log|F|+2\Delta\phi
+\Delta\log\Delta\phi\ge0,
\]
which gives that the exponentiated function $|F|^2\e^{2\phi}\Delta\phi$ is 
subharmonic, as claimed. In the remaining case when $F$ vanishes identically
the claim is trivial. Next, by the estimate of each solution of the 
$\bar\partial$-problem supplied by Theorem \ref{thm-H2}, it is given that
the function 
$|F|^2\e^{2\phi}\Delta\phi$ is in $L^2(\C)$. If $\D(z_0,r)$ denotes the open 
disk of radius $r$ about $z_0$, the sub-mean vakue property of subharmonic
functions gives that
\[
|F(z_0)|^2\e^{2\phi(z_0)}\Delta\phi(z_0)\le \frac{1}{\pi r^2}\int_{\D(z_0,r)}
|F|^2\e^{2\phi}\Delta\phi \diff A\le 
\frac{1}{\pi r^2}\int_{\C}|F|^2\e^{2\phi}\Delta\phi\diff A.
\]
Letting $r\to+\infty$, we see that the left hand side vanishes. As $z_0$ is
arbitrary, it follows that $F(z)\equiv0$. This completes the proof.
\end{proof}

\begin{rem}
In Lemma 3.1 of \cite{AlRoz} appears a condition which is analogous to
\eqref{eq-dual1} in the context of distributions with compact support. 
Also, in the related paper \cite{RozShir}, the Fock weight case 
$\phi(z)=\frac12|z|^2$ is considered in the soft-topology setting of 
solutions which are distributions.  
\end{rem}

\begin{rem}
With $\phi=0$, the norm identity \eqref{eq-normid1} expresses that the
Beurling operator is an isometry on $L^2(\C)$, which implies the Grunsky 
inequalities in the theory of conformal mapping (see, e.g., \cite{BarHed};
cf. \cite{BerSch}). 
In the (somewhat singular) case when $\phi(z)=\theta\log|z|$, 
\eqref{eq-normid1} provides the main norm identity of \cite{Hed2}, which 
leads to a Prawitz-Grunsky type inequality for conformal maps. The most general 
inequality of this type (with multiple ``branch points'') for conformal maps 
was obtained by Shimorin \cite{Shim} (see also \cite{AbuHed}). 
It would be of interest to see if the results of Shimorin may be obtained 
from the general norm identity \eqref{eq-normid1}.   
\end{rem}

\section{Discussion of the necessity of the orthogonality 
condition on the datum}
\label{sec-constr}

\subsection{The Fock weight case}
We now narrow down the discussion to the Fock weight $\phi(z)=\frac12|z|^2$. 
Since the weight is radial, the polynomials are dense in 
$A^2(\C,\e^{-|z|^2})$. Also, the curvature-type condition of Theorem 
\ref{thm-H2.1} is readily checked.   
It follows that Theorems \ref{thm-H2} and \ref{thm-H2.1} combine to give
the following result.

\begin{thm}
Suppose $f\in L^2(\C,\e^{|z|^2})$. If the datum $f$ satisfies the moment 
condition
\[
\int_\C z^jf(z)\diff A(z)=0,\qquad j=0,1,2,\ldots,
\]
then there exists a unique solution $u$ to the equation $\dbar u=f$ 
with
\[
\int_\C|u(z)|^2\e^{|z|^2}\diff A(z)\le \int_\C|f(z)|^2\e^{|z|^2}
\diff A(z).
\]
\label{thm-BF0}
\end{thm}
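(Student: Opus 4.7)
The plan is to obtain Theorem \ref{thm-BF0} as a direct specialization of Theorems \ref{thm-H2} and \ref{thm-H2.1} to the Fock weight $\phi(z)=\frac12|z|^2$. The first step is the routine verification of the hypotheses. A direct computation gives $\hDelta\phi\equiv\tfrac12$, which is smooth and strictly positive, so $\phi$ satisfies the standing assumption of Theorem \ref{thm-H2}. Since $\hDelta\phi$ is constant, $\Delta\log\Delta\phi\equiv 0$ and the curvature-type inequality
\[
\frac{1}{\Delta\phi}\Delta\log\Delta\phi\ge -2
\]
required by Theorem \ref{thm-H2.1} holds trivially.

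Next, I would translate the stated moment condition into the orthogonality condition \eqref{eq-dual1}. The key fact to invoke is that, in the radial Fock setting, the monomials $\{z^j\}_{j\ge 0}$ span a dense subspace of $A^2(\C,\e^{-|z|^2})$; indeed, $\{z^j/\sqrt{\pi\,j!}\}_{j\ge 0}$ is an orthonormal basis. Hence the vanishing of $\int_\C z^j f\,\diff A$ for every $j\ge 0$ is equivalent to $\int_\C fg\,\diff A=0$ for every $g\in A^2(\C,\e^{-|z|^2})$, which is exactly \eqref{eq-dual1}.

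With the hypotheses in place, Theorem \ref{thm-H2} produces a solution $u$ of $\dbar u=f$ satisfying
\[
\int_\C|u|^2\e^{|z|^2}\hDelta\phi\,\diff A \le \frac12\int_\C|f|^2\e^{|z|^2}\,\diff A.
\]
Substituting $\hDelta\phi=\tfrac12$ and clearing the common factor yields the stated estimate, and uniqueness of $u$ follows immediately from Theorem \ref{thm-H2.1}. The main obstacle, if one can call it that, is the density of polynomials in the Fock space; once this textbook fact is invoked, everything else reduces to a one-line substitution.
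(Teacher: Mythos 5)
Your proof is correct and follows essentially the same route as the paper: the author likewise deduces Theorem \ref{thm-BF0} by combining Theorems \ref{thm-H2} and \ref{thm-H2.1} for $\phi(z)=\frac12|z|^2$, using the density of polynomials in $A^2(\C,\e^{-|z|^2})$ (valid since the weight is radial) to reduce \eqref{eq-dual1} to the moment condition, and checking the curvature-type hypothesis, which is trivial since $\hDelta\phi$ is constant. Your explicit computation $\hDelta\phi\equiv\frac12$ and the resulting cancellation of the factor $\frac12$ in the estimate match the paper exactly.
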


We may ask what would happen if the orthogonality condition is not satisfied.
Maybe there still exists some rapidly decaying solution $u$ anyway? The answer
is definitely no.

\begin{thm}
Suppose $f\in L^2(\C,\e^{|z|^2})$, and that $u$ solves the equation $\dbar u
=f$ while $u\in L^2(\C,\e^{\epsilon|z|^2})$ for some positive real $\epsilon$. 
Then the datum $f$ has
\[
\int_\C z^jf(z)\diff A(z)=0,\qquad j=0,1,2,\ldots.
\]
\label{thm-BF1}
\end{thm}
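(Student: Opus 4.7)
The plan is to prove the moment condition by integration by parts against the holomorphic test functions $z^j$, justified by a cutoff approximation that exploits the super-polynomial decay of $u$.

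First I would fix an integer $j\ge 0$ and choose a smooth radial cutoff $\chi_R\in C^\infty_c(\C)$ with $\chi_R\equiv 1$ on $\D(0,R)$, $\chi_R\equiv 0$ off $\D(0,2R)$, and $|\dbar\chi_R|\le C/R$ uniformly in $R$. Since $\dbar u=f$ in the sense of distributions and $z^j\chi_R\in C^\infty_c(\C)$, the distributional pairing gives
\[
\int_\C z^j\chi_R\,f\,\dA=-\int_\C u\,\dbar(z^j\chi_R)\,\dA=-\int_\C u\,z^j\,\dbar\chi_R\,\dA,
\]
because $z^j$ is entire.

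Next I would let $R\to+\infty$ on both sides. For the left-hand side, the pointwise convergence $z^j\chi_R f\to z^jf$ combined with the integrable majorant obtained from Cauchy--Schwarz,
\[
\int_\C\abs{z^j f}\,\dA
\le\Big(\int_\C\abs{z}^{2j}\e^{-\abs{z}^2}\dA\Big)^{1/2}
\Big(\int_\C\abs{f}^2\e^{\abs{z}^2}\dA\Big)^{1/2}<\infty,
\]
allows dominated convergence, so the left-hand side tends to $\int_\C z^jf\,\dA$. For the right-hand side, the support of $\dbar\chi_R$ sits in the annulus $A_R:=\{R\le\abs{z}\le 2R\}$, and another application of Cauchy--Schwarz (splitting $|u|=|u|\e^{\epsilon|z|^2/2}\cdot\e^{-\epsilon|z|^2/2}$) yields
\[
\biggabs{\int_\C u\,z^j\dbar\chi_R\,\dA}
\le\frac{C(2R)^j}{R}\,\norm{u}_{L^2(\C,\e^{\epsilon\abs{z}^2})}
\Big(\int_{A_R}\e^{-\epsilon\abs{z}^2}\dA\Big)^{1/2}.
\]
The last factor decays like a multiple of $\e^{-\epsilon R^2/2}$, which dominates the polynomial growth $R^{j-1}$, so this bound tends to $0$ as $R\to+\infty$.

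The hard part, and the only place where the hypothesis on $u$ is used, is precisely this cutoff tail estimate: it is the Gaussian decay built into the weight $\e^{\epsilon|z|^2}$ that beats the polynomial factor $R^j$ coming from the test function $z^j$. Conclude by combining the two limits: $\int_\C z^jf\,\dA=0$ for every $j\ge 0$, as asserted. Note that the argument works for any $\epsilon>0$, no matter how small, so no integrability against the full Fock weight $\e^{\abs{z}^2}$ is required of $u$.
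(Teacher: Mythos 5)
Your proof is correct, but it takes a genuinely different route from the paper. The paper proves Theorem \ref{thm-BF1} by Fourier--Bargmann transform methods: the hypotheses guarantee that $\hat f$ and $\hat u$ extend to entire functions of two variables, the equation $\dbar u=f$ transforms into $(\imag\xi-\eta)\hat u(\xi,\eta)=2\hat f(\xi,\eta)$, which forces $\hat f$ to vanish on the complex line $\eta=\imag\xi$, and Taylor expansion of $\xi\mapsto\hat f(\xi,\imag\xi)$ then produces all the moment conditions at once. Your argument instead treats each $j$ separately by pairing $\dbar u=f$ against the non-compactly-supported test function $z^j$, made legitimate by the cutoff $\chi_R$; the only analytic input is that the Gaussian decay $\e^{-\epsilon\abs{z}^2/2}$ in the annulus $R\le\abs{z}\le2R$ beats the polynomial factor $R^{j}\cdot R^{-1}$ from $z^j\dbar\chi_R$. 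Your route is more elementary (no entire extension of the Fourier transform, no two-variable Bargmann theory) and in fact needs less of $f$ and $u$ than stated: any hypotheses ensuring $\int_\C\abs{z}^j\abs{f}\diff A<\infty$ for each $j$ and sufficient decay of $u$ on large annuli would suffice. What the paper's approach buys is a single structural statement --- the vanishing of the entire function $\hat f(\xi,\imag\xi)$ --- which packages all moments simultaneously and sits naturally alongside the Bargmann-transform characterization \eqref{eq-BT2dim} already set up in Section \ref{sec-constr}. Both proofs are valid; yours is a clean alternative.
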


Before we turn to the proof, we observe that if $f\in L^2(\R)$ with
\[
\int_\R |f(x)|\e^{x^2/\beta}\diff x<+\infty
\]
for some real $\beta>0$, then its Fourier transform 
\[
\hat f(\xi):=\int_\R \e^{-\imag\xi x}f(x)\diff x
\] 
extends to an entire function with
\[
\int_\C |\hat f(\xi)|^2\e^{-\beta|\im\xi|^2}\diff A(\xi)=
\frac{2\pi^{3/2}}{\sqrt{\beta}}\int_\R|f(x)|\e^{x^2/\beta}\diff x<+\infty.
\] 
In fact, the standard Bargmann transform theory asserts 
that this integrability condition characterizes the Fourier image of this 
weighted $L^2$ space on $\R$ (cf. \cite{Groech1}). The two-variable extension
of the above-mentioned result maintains that if $f\in L^2(\C,\e^{|z|^2/\beta})$,
then its Fourier transform
\[
\hat f(\xi,\eta):=\int_\C \e^{-\imag(\xi x+\eta y)}f(x+\imag y)\diff x\diff y
\]
is an entire function of two variables, with
\begin{equation}
\label{eq-BT2dim}
\iint_{\C\times\C} |\hat f(\xi,\eta)|^2\e^{-\beta(|\im\xi|^2+|\im\eta|^2)}
\diff A(\xi)\diff A(\eta)=
\frac{4\pi^{3}}{\beta}\int_\C|f(z)|\e^{|z|^2/\beta}\diff A(z)<+\infty.
\end{equation}

\begin{proof}[Proof of Theorem \ref{thm-BF1}]
Since $f\in L^2(\C,\e^{|z|^2})$, the function $\hat f$ is an entire function of
two variables with the norm bound \eqref{eq-BT2dim} with $\beta=1$. 
Likewise, as $u\in L^2(\C,\e^{\epsilon|z|^2})$ for some positive $\epsilon$,
$\hat u$ is an entire function of two variables. After Fourier transformation, 
the relation $\dbar u=f$ reads
\[
(\imag\xi -\eta)\hat u(\xi,\eta)=2\hat f(\xi,\eta).
\]
This is only possible if $\hat f(\xi,\eta)$ vanishes when $\imag\xi-\eta=0$,
i.e., $\hat f(\xi,\imag\xi)\equiv0$. By the definition of the Fourier 
transform, this means that
\[
0=\hat f(\xi,\imag\xi)=\int_\C \e^{-\imag\xi(x+\imag y)}f(x+\imag y)
\diff A(x+\imag y)=\int_\C \e^{-\imag\xi z}f(z)
\diff A(z)=\sum_{j=0}^{+\infty}\frac{(-\imag\xi)^j}{j!}\int_\C
z^jf(z)\diff A(z).
\]
By Taylor's formula, then, this implies that
\[
\int_\C z^jf(z)\diff A(z)=0,\qquad j=0,1,2,\ldots,
\]
as needed. 
\end{proof}

\begin{rem}
The constant in Theorem \ref{thm-BF0} is sharp. Indeed, we may consider the
datum $f(z)=-z\e^{-|z|^2}$ for which the solution is $u(z)=\e^{-|z|^2}$.
We calculate that
\[
\int_\C|u(z)|^2\e^{|z|^2}\diff A(z)=\int_\C\e^{-|z|^2}\diff A(z)=\pi
\]
while
\[
\int_\C|f(z)|^2\e^{|z|^2}\diff A(z)=\int_\C|z|^2\e^{-|z|^2}\diff A(z)=\pi,
\]
which gives the desired sharpness.
\end{rem}

\section{Acknowledgements}

The author wishes to thank Ioannis Parissis and Serguei Shimorin 
\cite{ParShi} for stimulating discussions related to the norm identity 
\eqref{eq-normid1}. 
The author also wishes to thank Grigori Rozenblum for an enlightening 
conversion on the topic of this paper at the Euler Institute in 
St-Petersburg in July, 2013.

\end{document}